\documentclass[a4paper,reqno, 12pt]{amsart}
\usepackage[T1]{fontenc}
\usepackage[utf8]{inputenc}
\usepackage{amsmath}
\usepackage{amsthm}
\usepackage{amssymb}
\usepackage{mathrsfs}
\usepackage{graphicx}
\usepackage[shortlabels]{enumitem}
\usepackage{mathtools}
\usepackage[usenames,dvipsnames,svgnames,table]{xcolor}
\usepackage[longnamesfirst,numbers,sort&compress]{natbib}
\usepackage{hyperref}
\hypersetup{
colorlinks=true,
linkcolor=black,
urlcolor={blue!60!black},
citecolor=black,
filecolor=black,
pdftitle={Extended formulations for matroid polytopes through randomized protocols},
pdfauthor={Manuel Aprile}} 

\addtolength{\textwidth}{3 truecm}
\addtolength{\textheight}{1 truecm}
\setlength{\voffset}{-.5 truecm}
\setlength{\hoffset}{-1.5 truecm}
\parindent 0mm
\parskip   5mm

\renewcommand{\leq}{\leqslant}

\DeclareMathOperator{\conv}{conv}
\DeclareMathOperator{\xc}{xc}

\newcommand{\R}{\mathbb R}

\newcommand{\calB}{\mathcal B}

\newcommand{\calV}{\mathcal V}

\renewcommand{\conv}{\mathrm{conv}}

\newcommand{\rk}{\mbox{rk}}

\newtheorem{prop}{Proposition}
\newtheorem{conj}{Conjecture}

\newtheorem{lem}[prop]{Lemma}

\newtheorem{thm}[prop]{Theorem}

\newtheoremstyle{question}{}{}{\color{blue}}{}{\color{blue}\bfseries}{}{ }{}
\theoremstyle{question}

\newtheoremstyle{openquestion}{}{}{\color{red}}{}{\color{red}\bfseries}{}{ }{}
\theoremstyle{openquestion}

\title[ ]{Extended formulations for matroid polytopes through randomized protocols}

 \author{Manuel Aprile}

\begin{document}

\maketitle

\makeatletter
\let\old@setaddresses\@setaddresses
\def\@setaddresses{\bigskip\bgroup\parindent 0pt\let\scshape\relax\old@setaddresses\egroup}
\makeatother


\begin{abstract}
 Let $P$ be a polytope.
 The hitting number of $P$ is the smallest size of a hitting set of the facets of $P$, i.e., a subset of vertices of $P$ such that every facet of $P$ has a vertex in the subset. An extended formulation of $P$ is the description of a polyhedron that linearly projects to $P$. We show that, if $P$ is the base polytope of any matroid, then $P$ admits an extended formulation whose size depends linearly on the hitting number of $P$. Our extended formulations generalize those of the spanning tree polytope given by Martin and Wong. Our proof is simple and short, and it goes through the deep connection between extended formulations and communication protocols. 
\end{abstract}

\textbf{Keywords:} extended formulations; matroids; matroid polytope; randomized protocol; spanning tree polytope.

\section{Introduction}

Describing combinatorial problems via geometric objects is a major theme in combinatorial optimization.
For instance, spanning trees in a graph can be described by the spanning tree polytope, which has a well known description \cite{Edmonds71}. However, such polytope has an exponential number of facets, hence its description is too large to be used in practice. In such cases, one can try to add extra variables to the ``natural'' polytope and find an alternative description in an extended space.



 An \emph{extended formulation} of polytope $P\subset \R^d$ is a polyhedron $Q\subset \R^{d+d'}$ that linearly projects to $P$. The \emph{size} of such formulation is its number of inequalities (i.e., the number of facets of $Q$), and the \emph{extension complexity} of $P$, denoted by $\xc(P)$, is the minimum size of an extended formulation of $P$. 
 The systematic study of extended formulations and extension complexity began with Yannakakis \cite{yannakakis1991expressing} and produced a number of impressive results that shed light on the power and on the limits of linear programming \cite{fiorini2015exponential, Rothvoss14}. We refer to \cite{conforti2013extended} for a survey on the topic. 

Let $G$ be an $n$-vertex graph. While the description of the spanning tree polytope of $G$ has $\Omega(2^n)$ inequalities, extended formulations due to Wong \cite{Wong80} and Martin \cite{Martin91} have size $O(n^3)$. Since a cubic number of variables and constraints is still large for practical purposes, a famous open question is whether it is possible to find even smaller extended formulations, see \cite{khoshkhah2018combinatorial, FHJP17, aprile2021smaller}.

Extension complexity is deeply related with the field of communication complexity \cite{kushilevitz2006communication}, which inspired the most celebrated results in the field \cite{fiorini2015exponential, Rothvoss14}. This connection was hinted in \cite{yannakakis1991expressing} and then established in \cite{FFGT15}, where the extension complexity of a polytope $P$ is expressed as the complexity of a randomized protocol solving a certain game on vertices and facets of $P$ (see Section \ref{sec:prot} for details). In particular, \cite{FFGT15} gives a nice, simple protocol for the spanning tree polytope matching the $O(n^3)$ extended formulation from \cite{Martin91}.


Matroids are among the most mysterious objects from the point of view of extension complexity.
The base polytope $B(M)$ of a matroid $M$ is the convex hull of incidence vectors of bases of $M$.  
Bases of general matroids generalize the spanning trees of a graph, hence $B(M)$ is a natural generalization of the spanning tree polytope.  While the optimization problem for matroids is polynomial-time solvable in the oracle model, it is known \cite{rothvoss2013some} that there are matroids $M$ whose extension complexity (by which we mean $\xc(B(M))$) is exponential. However, finding an explicit class of such matroids is a notorious open problem, deeply related to the field of circuit complexity \cite{huynh2016}. 

On the other hand, a number of special classes of matroids have been found to have polynomial extension complexity: graphic and cographic matroids (thanks to the aforementioned formulations of the spanning tree polytope), sparsity matroids \cite{iwata2016extended}, count matroids \cite{conforti2015subgraph}, regular matroids \cite{aprile2019regular}.
In particular, regular matroids are those matroids that can be represented by totally unimodular matrices. This class strictly contains the classes of graphic and cographic matroids and is strictly contained in the class of binary matroids, that can be represented by matrices over the two-elements field GF$(2)$. 
Proving polynomial upper bounds on the extension complexity of binary matroids, or showing a super-polynomial lower bound on it, is a major open question. All the extended formulations proposed so far for special classes of matroids are deeply based on connections with graphs and their structure.


In this paper, we show a general method to derive extended formulations for the base polytope of matroids. This is done through the aforementioned connection between randomized protocols and extended formulations \cite{FFGT15}: in particular we extend the protocol in \cite{FFGT15} for the spanning tree polytope to all matroids, obtaining an extended formulation for $B(M)$ whose size depends on a certain parameter that we introduce.
This parameter can be defined for any polytope as follows: given a polytope $P$, the \emph{hitting number} $h(P)$ of $P$ is the smallest size of a set $\calV$ of vertices of $P$ such that each facet of $P$ contains at least a vertex in $\calV$. 
Our result (Theorem \ref{thm:main}) is that all matroids $M$ on $n$ elements such that $h(B(M))$ is polynomial in $n$ have polynomial extension complexity. The proof of Theorem \ref{thm:main} makes use of the power of randomized protocols (Theorem \ref{thm:random}) and of the bijective basis exchange axiom (Lemma \ref{lem:bijection}); apart from those tools, it is simple and very short.

The hitting set number of a polytope is a natural combinatorial parameter, which, to the best of our knowledge, has not been previously studied (in particular not in relation to extension complexity). 
It is possible that the connection between these two parameters extends to more general polytopes: in fact, we do not know a polytope with exponential extension complexity and polynomial hitting number. 

The paper is structured as follows: in Section \ref{sec:prelim} we give some preliminaries and recall the connection between extension complexity and randomized protocols from \cite{FFGT15}; in Section \ref{sec:proof} we prove our main result; in Section \ref{sec:graphic} we apply our result to the special case of graphic and cographic matroids, and re-derive known bounds on their extension complexity; finally, in Section \ref{sec:conclusion} we suggest possible applications of our result and mention some open questions.


%


\section{Preliminaries}\label{sec:prelim}

We now recall the main notions of matroid theory that we will need. We refer to \cite{oxley2006matroid} for the notions that are not defined.
Throughout the paper, we consider $(M,\calB)$ to be a loopless matroid on ground set $E$ with base set $\calB$.
If $F\subseteq E$ and $e\in E$, we write $F+e$ for $F\cup \{e\}$ and $F-e$ for $F\setminus \{e\}$. A set $F\subseteq E$ is a \emph{flat} if $\rk(F+e)>\rk(F)$ for any $e\in E\setminus F$, where $\rk$ denotes the rank function of $M$.

The \emph{base polytope} of $M$ is $B(M)=\conv\{\chi^B: B\in \calB\}$, where $\chi^B\in \R^E$ is the incidence vector of basis $B$. The following is a description of $B(M)$: 
$$
B(M)=\{x \in \R^{E}_+ \mid x(E)=\rk(E), x(F) \leq \rk(F) \,\forall \text{ flat } F\subseteq E\},
$$
where we write $x(F)=\sum_{e\in F} x_e$ for $F\subseteq E$.

It is easy to see that the description above is redundant, for instance one can restrict to inequalities corresponding to connected flats. The flats of $M$ that induce facets of $B(M)$ are called \emph{flacets} (see \cite{feichtner2005matroid} for a characterization of flacets). The quantity $\rk(F)-|B\cap F|$ is the \emph{slack} of the inequality corresponding to $F$ with respect to vertex $\chi^B$. Basis $B$ is said to have \emph{full intersection} with flat $F$ if it has slack 0 with respect to the corresponding inequality, i.e., if $|B\cap F|=\rk(F)$.

Let $\calV$ be a family of bases of $M$. We say that $\calV$ is a \emph{hitting} family for $M$ if, for any flacet $F$ of $M$, there is at least one basis $B\in \calV$ that has full intersection with $F$. Slightly abusing notation, we denote by $h(M)$ the size of the smallest hitting family for $M$. Notice that this does not correspond exactly to $h(B(M))$, as we are ignoring non-negativity facets, but the two numbers are the same up to an additive $n$ factor, hence the difference is irrelevant to the purpose of this paper.

We mention another, perhaps more famous, polytope related to matroids: the independence polytope $P(M)$ is the convex hull of incidence vectors of independent sets of matroid $M$.
It is well known that $B(M)$ is the face of $P(M)$ defined by $x(E)=\rk(E)$, and that the extension complexities of $P(M)$ and $B(M)$ are the same apart from linear factors: in particular, $\xc(B(M))\leq \xc(P(M))\leq \xc(B(M))+2|E|$. Hence, our Theorem \ref{thm:main} implies similar bounds for independence polytopes as well.

\subsection{Randomized protocols}\label{sec:prot}
We now introduce randomized protocols, referring the reader to \cite{FFGT15} for a more formal definition. Let $P = \conv(\{v_1,\dots, v_N\})=\{x\in \mathbb{R}^d \mid Ax \leq b\}$, where $v_1,\dots, v_N\in\R^d$, $A \in \R^{m \times d}$, $b \in \R^m$. The former description of $P$ is called a \emph{vertical} description, while the latter is called a \emph{horizontal} description.

We consider the following cooperative game between two agents, Alice and Bob: Alice has a row $A_i$ of $A$ as input, Bob has a vertex $v_j$ as input, for some $i\in [m]$, $j\in [N]$, and their goal is to compute the slack $b_i-a_iv_j$ \emph{in expectation}. The agents have unlimited computational power, but do not know each other's input, hence they need to communicate by exchanging bits. Their communication is specified by an algorithm called \emph{randomized protocol}, whose output depends on the agents' inputs, on the communication that takes place between them, and on randomness. The output is then a random variable $X_{i,j}$ depending on the inputs $A_i, v_j$, and the protocol is said to \emph{compute the slack of $P$} (in expectation) if, for any $i\in [m]$, $j\in [N]$, the expectation of $X_{i,j}$ is equal to $b_i-a_iv_j$.
The \emph{complexity} of a randomized protocol is the maximum number of bits exchanged between Alice and Bob on any run of the protocol.

In \cite{FFGT15}  it is shown that, for any polytope $P$, there exists a randomized protocol computing the slack of $P$ of complexity $\lceil \log (\xc(P)) \rceil$. 
Notice that a randomized protocol for $P$ actually depends on the vertical and horizontal representations given: it is customary to use minimal representations, where the $v_j$'s are the vertices of $P$ and the rows of $A$ correspond to facets of $P$ (even though the latter assumption is often relaxed).
 Often, it is more elegant to give a protocol that only considers  the ``non-trivial'' facets of $P$ as possible inputs, for instance ignoring non-negativity inequalities. One can see (Lemma 3 of \cite{FFGT15}) that this still yields an extended formulation for $P$ of approximately the same size. 
 
We report the following upper bound on $\xc(P)$ in the form that we will use in this paper.

\begin{thm}[\cite{FFGT15}]
\label{thm:random}
Let $P=\{x\in [0,1]^d: Ax\leq b\}$ be a polytope such that there is a randomized protocol of complexity $c$ computing the slack of $P$, when Alice's input is restricted to a row of $A$. Then $\xc(P)\leq 2^c+2d$.
\end{thm}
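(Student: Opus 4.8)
The plan is to go through nonnegative matrix factorizations, combining the classical translation from communication protocols to nonnegative factorizations of the slack matrix with Yannakakis' theorem \cite{yannakakis1991expressing}, which turns such a factorization into an extended formulation of the same size. Write $P=\conv(v_1,\dots,v_N)$ and let $\bar A x\leq \bar b$ be the system obtained from $Ax\leq b$ by appending the $2d$ box inequalities $0\leq x_i\leq 1$ ($i\in[d]$); this is a complete, possibly redundant, inequality description of $P$. Let $S\in\R^{(m+2d)\times N}_+$ be its slack matrix, $S_{ij}=\bar b_i-\bar A_i v_j\geq 0$, and split it into the block $S_A$ indexed by the rows of $A$ and the block $S_{\mathrm{box}}$ indexed by the box inequalities.

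First I would extract a nonnegative factorization of $S_A$ from the protocol. Fix the protocol of complexity $c$ in which Alice holds a row $A_i$ and Bob holds a vertex $v_j$; since at most $c$ bits are exchanged on any run, there are at most $2^c$ possible transcripts $\ell$. For each transcript one invokes the standard rectangle structure of protocols: since Alice's messages depend only on her input and her private coins, and Bob's only on his, the probability that inputs $(i,j)$ produce transcript $\ell$ factors as $\alpha_\ell(i)\,\beta_\ell(j)$ with $\alpha_\ell,\beta_\ell\geq 0$, while the expected output given transcript $\ell$ is a single nonnegative number $w_\ell$. Then
\[
(S_A)_{ij}=\mathbb E[X_{ij}]=\sum_{\ell}\alpha_\ell(i)\,\beta_\ell(j)\,w_\ell ,
\]
which is exactly a nonnegative factorization $S_A=TU$ with $T\in\R^{m\times L}_+$, $U\in\R^{L\times N}_+$ and $L\leq 2^c$. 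To also handle the box inequalities, observe that $S_{\mathrm{box}}=I_{2d}\cdot S_{\mathrm{box}}$ is itself a nonnegative factorization of inner dimension $2d$; stacking it block-diagonally with $S_A=TU$ gives a nonnegative factorization of the full slack matrix,
\[
S=\begin{pmatrix} T & 0 \\ 0 & I_{2d}\end{pmatrix}\begin{pmatrix} U \\ S_{\mathrm{box}}\end{pmatrix},
\]
of inner dimension $L+2d\leq 2^c+2d$.

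Finally I would invoke the easy direction of Yannakakis' theorem: given a nonnegative factorization $S=FV$ in which $F$ has $r$ columns, the polyhedron $Q=\{(x,y)\in\R^d\times\R^r_+ : \bar A x+Fy=\bar b\}$ has only $r$ inequalities and projects onto $P$, since $Fy\geq 0$ forces $\bar A x\leq \bar b$, while each vertex $v_j$ lifts to the point with $y$ equal to the $j$-th column of $V$. Applying this with $r=2^c+2d$ yields $\xc(P)\leq 2^c+2d$. The delicate part is the first step: one has to be careful that a \emph{randomized} protocol, not merely a deterministic one, still produces a genuine nonnegative rank decomposition with one rank-one term per transcript — concretely, that the transcript probability splits as an Alice-factor times a Bob-factor once the randomness is organized as private coins (public coins being handled by averaging), and that the contribution of each transcript can be taken to be a single nonnegative scalar rather than an input-dependent random quantity. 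Both facts are standard, but this is where the real content of \cite{FFGT15} sits; the rest of the argument above is bookkeeping.
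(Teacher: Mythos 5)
This theorem is imported from \cite{FFGT15}; the paper states it without proof, so there is no internal argument to compare against. Your reconstruction is correct and is essentially the proof from \cite{FFGT15}: the leaves of the protocol tree give a nonnegative factorization of the slack matrix restricted to the rows of $A$ with at most $2^c$ rank-one terms, the $2d$ box inequalities are factored trivially through the identity, and the easy direction of Yannakakis' theorem converts the resulting rank-$(2^c+2d)$ factorization into an extended formulation with $2^c+2d$ inequalities (the block-stacking step is exactly Lemma 3 of \cite{FFGT15}, which the paper alludes to in Section \ref{sec:prot}).

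Two small refinements. First, in your displayed formula the leaf value $w_\ell$ cannot in general be taken to be a single scalar independent of the inputs: the output at a leaf is announced by one player and may depend on that player's input as well as on the transcript, so the correct form is $\sum_\ell \alpha_\ell(i)\,\beta_\ell(j)\,w_\ell(i)$ (or $w_\ell(j)$), which still contributes one nonnegative rank-one matrix per leaf since the factor $w_\ell$ folds into the corresponding player's vector. This extra generality is actually needed for the paper's own protocol in Theorem \ref{thm:main}, where Alice's final output ($r$ or $0$) depends on whether $b_i\in F$ and $b_i'\in F$, i.e., on her input $F$ and not only on the transcript. Second, the parenthetical claim that public coins are ``handled by averaging'' should be dropped or qualified: averaging a public-coin protocol over the shared randomness multiplies the number of rank-one terms by the support size of that randomness, so the $2^c$ bound is specific to private-coin protocols, which is the model used here and in \cite{FFGT15}.
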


We remark that Theorem \ref{thm:random} only guarantees the existence of the desired formulation; the latter can be written down using Yannakakis' Theorem \cite{yannakakis1991expressing}, but this takes the same time as writing the original description of $P$. In \cite{aprile2020extended} it is shown how to obtain the formulation ``efficiently'' for the special case of deterministic protocols (where no randomness is involved). Although the general case of randomized protocols is open, it is possible to obtain explicit formulations for some randomized protocols by exploiting our knowledge of $P$ (see \cite{aprile2021smaller} for an example).

\section{Proof}\label{sec:proof}
In this section we prove our main result, Theorem \ref{thm:main}. We need the following ``bijective'' basis exchange axiom (see Exercise 12 of Chapter 12 of \cite{oxley2006matroid}, \cite[Corollary 39.12a]{schrijver2003combinatorial}, or \cite{brualdi1969very}).

\begin{lem}\label{lem:bijection}
  Let $B, B'$ be bases of a matroid $M$. Then there is a bijection $\sigma:B\rightarrow B'$ such that, for each $e\in B$, $B-e+\sigma(e)$ is a basis of $M$.
\end{lem}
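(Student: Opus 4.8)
The plan is to derive the lemma from Hall's marriage theorem applied to a bipartite ``exchange graph''. First I would observe that it is enough to find a bijection $\sigma$ between $B\setminus B'$ and $B'\setminus B$ satisfying the stated property, and then extend it to all of $B$ by setting $\sigma(e)=e$ for $e\in B\cap B'$, for which $B-e+\sigma(e)=B$ is trivially a basis. Since $|B|=|B'|=\rk(E)$ we have $|B\setminus B'|=|B'\setminus B|$, so such a bijection is precisely a perfect matching in the bipartite graph $H$ with parts $B\setminus B'$ and $B'\setminus B$ in which $e$ and $f$ are adjacent exactly when $B-e+f\in\calB$.

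To verify Hall's condition for $H$, fix $X\subseteq B\setminus B'$ and let $N(X)=\{f\in B'\setminus B:\ B-e+f\in\calB\text{ for some }e\in X\}$; the goal is $|N(X)|\ge|X|$. The idea is to exhibit a single basis that certifies $|X|$ neighbours of $X$ at once. The set $B\setminus X$ is independent and $(B\setminus X)\cup B'$ spans $E$ (because $B'$ does), so one can extend $B\setminus X$ to a basis $B''$ with $B\setminus X\subseteq B''\subseteq(B\setminus X)\cup B'$. Since $X\subseteq B\setminus B'$ forces $B'\cap X=\emptyset$, the set $Z:=B''\setminus(B\setminus X)$ is contained in $B'\setminus B$, and $|Z|=\rk(E)-(\rk(E)-|X|)=|X|$.

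It then remains to show $Z\subseteq N(X)$. Fix $f\in Z\subseteq B''$ and let $C$ be the fundamental circuit of $f$ with respect to $B$, i.e.\ the unique circuit contained in $B+f$; recall that $f\in C$ and that $B-e+f\in\calB$ for every $e\in C-f$. If $(C-f)\cap X$ were empty we would have $C\subseteq(B\setminus X)+f\subseteq B''$, contradicting that $B''$ is independent. Hence some $e\in(C-f)\cap X$ witnesses $f\in N(X)$, so $|N(X)|\ge|Z|=|X|$, Hall's condition holds, and the resulting perfect matching is the required bijection. The one delicate point is the middle paragraph: one must use the hypothesis $X\subseteq B\setminus B'$ to ensure that the extension $B''$ adds only elements of $B'\setminus B$, and exactly $|X|$ of them — everything else is a routine invocation of the circuit axioms and of Hall's theorem.
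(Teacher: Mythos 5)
Your proof is correct. The paper itself does not prove Lemma \ref{lem:bijection}; it cites it as a known result (Brualdi's bijective basis-exchange theorem, Exercise 12.12 in Oxley, Corollary 39.12a in Schrijver), and your argument --- reducing to a perfect matching between $B\setminus B'$ and $B'\setminus B$, verifying Hall's condition by extending $B\setminus X$ to a basis inside $(B\setminus X)\cup B'$, and using fundamental circuits to show the $|X|$ new elements all lie in $N(X)$ --- is essentially the classical proof found in those references. All the delicate points you flag (that $Z\subseteq B'\setminus B$, that $|Z|=|X|$, and that a circuit avoiding $X$ would sit inside the independent set $B''$) check out.
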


Given two bases $B,B'$ with $B=\{b_1,\dots,b_r\}$, we say that $B'=\{b_1',\dots, b_r'\}$ is \emph{ordered} with respect to (the given ordering of) $B$ if $b_i'=\sigma(b_i)$ for $i\in [r]$, for a bijection $\sigma$ as in Lemma \ref{lem:bijection}. The following Theorem shows that, given $F\subseteq E$ and a basis $B$ that has full intersection with $F$, we can easily express the slack of inequality $x(F)\leq \rk(F)$ with respect to any other basis $B'$.

\begin{lem}\label{thm:slack}
Let $F\subseteq E$, let $B, B'$ be bases of $M$ with $B=\{b_1,\dots,b_r\}$, and  $B'=\{b_1',\dots, b_r'\}$ ordered with respect to $B$. Assume that $|B\cap F|=\rk(F)$.
Then we have
$$
\rk(F)-|B'\cap F|=|\{i\in [r]: b_i'\not\in F, b_i\in F\}|. 
$$
\end{lem}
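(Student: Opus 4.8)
The plan is to analyze how the intersection with $F$ changes as we pass from $B$ to the ordered basis $B'$, element by element along the bijection $\sigma$. Since $B' = \sigma(B)$ and each single swap $B - b_i + b_i'$ is a basis, I would compare $|B\cap F|$ and $|B'\cap F|$ by tracking, for each index $i\in[r]$, the contribution of the pair $(b_i, b_i')$ to the respective intersections. Partition $[r]$ into four classes according to whether $b_i\in F$ or $b_i\notin F$, and whether $b_i'\in F$ or $b_i'\notin F$: call their sizes $a$ (both in $F$), $b$ ($b_i\in F$, $b_i'\notin F$), $c$ ($b_i\notin F$, $b_i'\in F$), $d$ (neither in $F$). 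Then $|B\cap F| = a+b$ and $|B'\cap F| = a+c$, so $|B\cap F| - |B'\cap F| = b - c$. Using the hypothesis $|B\cap F| = \rk(F)$, the claimed identity $\rk(F) - |B'\cap F| = b$ is equivalent to showing $c = 0$, i.e., that there is no index $i$ with $b_i\notin F$ but $b_i' = \sigma(b_i)\in F$.

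The key step is therefore to rule out the class of size $c$. Suppose for contradiction that $b_i\notin F$ while $b_i'\in F$ for some $i$. By Lemma \ref{lem:bijection}, $B'' := B - b_i + b_i'$ is a basis. Now I would count $|B''\cap F|$: since $b_i\notin F$, removing $b_i$ does not affect the intersection with $F$, and since $b_i'\in F$, adding it increases the intersection with $F$ by one (note $b_i'\notin B$ because $b_i'\in B'$ and $B,B'$ have the same elements only in the trivial case — more carefully, $b_i'\notin B - b_i$ since $B''$ has $r$ elements). Hence $|B''\cap F| = |B\cap F| + 1 = \rk(F) + 1$. But $B''$ is independent, so $B''\cap F$ is an independent set contained in $F$, which forces $|B''\cap F| \le \rk(F)$, a contradiction. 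This shows $c = 0$ and completes the argument.

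Filling in the details, once $c=0$ is established we have $\rk(F) - |B'\cap F| = (a+b) - (a+c) = b - 0 = b = |\{i\in[r] : b_i\in F, b_i'\notin F\}|$, which is exactly the asserted formula. I expect the main (and essentially only) obstacle to be the contradiction step above: making sure the cardinality bookkeeping for $|B''\cap F|$ is correct, in particular that $b_i'$ is genuinely a new element not already counted, and invoking the defining property of $\rk$ — that any independent set inside $F$ has size at most $\rk(F)$ — to derive the contradiction. Everything else is a clean partition-and-count argument.
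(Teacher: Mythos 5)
Your proof is correct and follows essentially the same route as the paper: the paper also reduces the claim to showing that no index $i$ has $b_i\notin F$ and $b_i'\in F$, and rules this out by observing that the basis $B-b_i+b_i'$ would then meet $F$ in more than $\rk(F)$ elements. The only cosmetic difference is that you start from a four-way partition and prove one class is empty, whereas the paper folds that emptiness claim into its case analysis from the outset.
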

\begin{proof}
For $i\in [r]$, there are three cases: 
\begin{enumerate}
    \item $b_i'\in F$. We claim that $b_i\in F$. Indeed, assume by contradiction $b_i\not\in F$: then the intersection of $B-b_i+b_i'$ with $F$ is larger than $|B\cap F|=\rk(F)$, but, since $B'$ is ordered with respect to $B$, $B-b_i+b_i'$ is a basis of $M$, a contradiction.
    \item $b_i'\not\in F$, and $b_i\in F$.
    \item $b_i'\not\in F$, and $b_i\not\in F$.
\end{enumerate}
This determines a partition of $B'$ in $B_1'$, $B_2'$, $B_3'$ according to which case occurs, and also a partition of $B$ into $B_1, B_2, B_3$. Notice that $|B_1'|=|B'\cap F|$, and \[|B'_1|+|B'_2|=|B_1|+|B_2|=|B\cap F|=\rk(F),\]
implying that $\rk(F)-|B'\cap F|=|B_2'|$, which is exactly the thesis.
\end{proof}

 We are now ready to prove our main result.

\begin{thm}\label{thm:main}
For a matroid $M$ of $n$ elements and rank $r$, \[\xc(B(M))= O( h(M)\cdot n \cdot r).\]
\end{thm}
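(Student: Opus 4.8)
The plan is to apply Theorem~\ref{thm:random}: I would build a randomized protocol of complexity $c=O(\log(h(M)\cdot n\cdot r))$ computing the slack of $B(M)$ when Alice's input is a facet-defining inequality, since then $\xc(B(M))\le 2^{c}+2n=O(h(M)\cdot n\cdot r)$, the additive $2n$ being absorbed because $h(M),r\ge 1$. I would use the non-redundant description $B(M)=\{x\in[0,1]^{E}\mid x(E)=r,\ x(F)\le\rk(F)\ \forall\text{ flacet }F\}$, so that Alice's row is either one of the two inequalities encoding $x(E)=r$ --- where the slack at every vertex $\chi^{B'}$ is $r-|B'|=0$, which the protocol handles by outputting $0$ with no communication --- or a flacet inequality $x(F)\le\rk(F)$, which is the interesting case: here Bob holds a basis $B'$ and the target output is $\rk(F)-|B'\cap F|$.

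For the flacet case I would first fix a smallest hitting family $\calV$, so $|\calV|=h(M)$, together with a linear order on each of its members. On input $F$, Alice picks --- by a fixed deterministic rule --- a basis $B=\{b_{1},\dots,b_{r}\}\in\calV$ with full intersection with $F$ (such a $B$ exists precisely because $\calV$ is a hitting family) and sends its index in $\calV$ to Bob, costing $\lceil\log h(M)\rceil$ bits. Bob, now knowing $B$ with its order and his own $B'$, computes --- again by a fixed tie-breaking rule --- a bijection $\sigma:B\to B'$ as in Lemma~\ref{lem:bijection}, so that $B'=\{b_{1}',\dots,b_{r}'\}$ with $b_{i}'=\sigma(b_{i})$ is ordered with respect to $B$; crucially Alice never needs to know $\sigma$. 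By Lemma~\ref{thm:slack} the target slack equals $|\{i\in[r]\mid b_{i}\in F,\ b_{i}'\notin F\}|$, so it remains to sample this count: Alice draws $i\in[r]$ uniformly at random and sends $i$ together with the bit recording whether $b_{i}\in F$, using $\lceil\log r\rceil+1$ bits; if $b_{i}\notin F$ the output is $0$, and otherwise Bob replies with the label of the element $b_{i}'=\sigma(b_{i})\in E$ ($\lceil\log n\rceil$ bits), whereupon the output is $r$ if $b_{i}'\notin F$ and $0$ if $b_{i}'\in F$ --- this last test being carried out by Alice, who knows $F$.

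Correctness then follows: conditioned on the whole transcript the output is $0$ unless $b_{i}\in F$ and $b_{i}'\notin F$, in which case it is $r$, so since $i$ is uniform on $[r]$ the expected output is $|\{i\in[r]\mid b_{i}\in F,\ b_{i}'\notin F\}|$, which by Lemma~\ref{thm:slack} equals $\rk(F)-|B'\cap F|$; hence the protocol computes the slack of $B(M)$ in expectation. Its complexity is $\lceil\log h(M)\rceil+\lceil\log r\rceil+\lceil\log n\rceil+O(1)=O(\log(h(M)\cdot n\cdot r))$, and Theorem~\ref{thm:random} then delivers the bound. I expect the only genuinely delicate points to be that $F$ is known only to Alice while $B'$ is known only to Bob --- which is exactly what forces the single round-trip (Alice announces $i$, Bob answers $b_{i}'$) rather than a one-way message --- and the need to pin down $\sigma$ by a deterministic rule so that Bob's ordering of $B'$ is unambiguous; checking that $B(M)$ is indeed cut out by the box constraints together with the equality and the flacet inequalities is routine bookkeeping.
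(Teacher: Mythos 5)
Your proposal is correct and follows essentially the same route as the paper: Alice names a basis $B$ from a pre-agreed hitting family with full intersection with $F$, Bob orders $B'$ via the bijective exchange lemma, a uniformly random index $i\in[r]$ is sampled, and Lemma~\ref{thm:slack} turns the indicator of ``$b_i\in F$, $b_i'\notin F$'' scaled by $r$ into the slack in expectation. The only (immaterial) difference is that you have Alice sample $i$ and Bob reply with $b_i'$, whereas the paper has Bob sample $i$ and send both $i$ and $b_i'$ in one message; the communication cost is the same up to a constant.
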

\begin{proof}
We recall the setting of our protocol: Alice has a (facet-inducing) flat $F\subseteq E$, and Bob has a basis of $M$ as input, which we denote by $B'$ for convenience. They both agreed beforehand on a hitting family $\calV$ of size $h(M)$. Alice sends the index of a basis  $B\in \calV$ such that $|B\cap F|=\rk(F)$, using $\log h(M)$ bits. Let $B=\{b_1,\dots, b_r\}$. Bob orders $B'=\{b_1',\dots, b_r'\}$ with respect to $B$.
 He picks an element $i\in[r]$ uniformly at random, and he sends $b'_i$ and the index $i$ to Alice (note that he needs $\log n + \log r$ bits for that). Now, if $b'_i\not\in F$ and $b_i\in F$, then Alice outputs $r$. Otherwise, she outputs 0. The expected output of the protocol is equal to 
$$
\frac{1}{r}\cdot |\{i\in [r]: b_i'\not\in F, b_i\in F\}| \cdot r,
$$
which is equal to the slack of $B'$ and $F$ thanks to Lemma \ref{thm:slack}. The amount of communication needed is $\log h(M)+\log n + \log r$, proving the desired bound thanks to Theorem \ref{thm:random}.
\end{proof}


\section{Graphic and cographic matroids}\label{sec:graphic}

In this section, we derive known results on the extension complexity of the spanning tree polytope as easy consequences of Theorem \ref{thm:main}. 
Let $G(V,E)$ be a connected graph on $n$ vertices, and $P=P(G)$ be its spanning tree polytope. Equivalently, $P$ is the base polytope of the cycle matroid $M=M(G)$ of $G$. The vertices of $P$ are in one-to-one correspondence with the spanning trees of $G$, and a horizontal description of $P$ (see \cite{Edmonds71}) is \[P=\{x\in [0,1]^E: x(E(U))\leq |U|-1, \;\forall \, \emptyset\neq U\subset V, x(E)=|V|-1\},\] where $E(U)$ denotes the set of edges with both endpoints in $U$. 

 In order to apply Theorem \ref{thm:main}, we first restrict to $G=K_n$, 
 the complete graph on $n$ vertices. This corresponds to the most general case, as we will argue later. Notice that $M$ has $\binom{n}{2}$ elements and rank $n-1$. Consider the family $\calV$ containing the star $\delta(v)$ for any vertex $v$ of $K_n$. Notice that any facet-inducing inequality $x(E(U))\leq |U|-1$ has slack 0 with respect to vertex $\chi^T$, with $T=\delta(u)$ and $u\in U$: hence $\calV$ is a hitting family for $M$. Actually, since the inequalities corresponding to a set $U$ with $|U|\leq 1$ are not facet-defining, one can exclude one star from $\calV$ and obtain a hitting family of size $n-1=\rk(M)$, hence $h(M)\leq n-1$.
Blindly applying Theorem \ref{thm:main} would then imply that $\xc(M(K_n))$ is $O(n^4)$. However, the protocol from Theorem \ref{thm:main} can be made cheaper by the following observation.
Recall that, in our setting, Alice has a non-empty subset $U\subset V$ as input, and Bob has a spanning tree $T$ as input. Alice sends a vertex $u$ to indicate the basis $B=\delta(u)\in\calV$, and Bob orders his spanning tree $B'$ according to the bijection $\sigma$ as in Lemma \ref{lem:bijection}. Now, for an edge $e=(v,w)\in B'$, $\sigma(e)$ is either $(u,v)$ or $(u,w)$. Hence, in the protocol given in Theorem \ref{thm:main}, Bob does not need to send the index $i$ along with $e$, but just one bit to indicate one of the two endpoints of $e$. In this way we reconstruct the protocol from \cite{FFGT15} for the spanning tree polytope, where instead of considering the bijection $\sigma$ Bob orients the edges of $T$ away from $u$. Both protocols have complexity $\log \binom{n}{2}+ \log n +1$, which gives the bound $\xc(M(K_n))=O(n^3)$ from \cite{Martin91, Wong80}.

Finally, notice that restricting to complete graphs is without loss of generality, as $P(G)$ for any graph $G$ is obtained as a face of $P(K_n)$, with $n=|V|$. As the extension complexity of a face of $P$ is at most that of $P$, the $O(n^3)$ bound carries over. Moreover, we observe that the above protocol is still valid (and has complexity $\log |V|+ \log |E|+1$) for a general graph $G$ even if the family $\calV$ is not a valid hitting set for $P(G)$: this gives the known bound $\xc(P(G))=O(|V||E|)$. 



Notice that the latter discussion does not imply that $h(M(G))$ is polynomial in $n$ for any $n$-vertex graph $G$: for instance, we do not know a polynomial bound when $G$ is a grid graph.
We suspect this behaviour of function $h$ to hold more in general: even if, for a given matroid $M$, $h(M)$ may be large or just difficult to bound, a strategy to bound $\xc(B(M))$ is to find a matroid $M'$ such that i)  $M$ is a minor of $M'$ with $\rk(M)=\rk(M')=r$; ii) $h(M')$ is polynomial in $r$. In Section \ref{sec:conclusion}, we suggest an application of this principle to regular matroids.

Finally, let $M^*$ be the dual matroid of any matroid $M$. We remark that, since $B(M)$ and $B(M^*)$ are affinely isomorphic, one has $h(M)=h(M^*)$. In particular, it follows that $h(M^*(K_n))\leq n-1$.

\section{Open questions}\label{sec:conclusion}

The main motivation that led us to Theorem \ref{thm:main} was to find an alternative proof for the fact that regular matroids have polynomial extension complexity \cite{aprile2019regular}, as the original proof is quite involved. As most results on regular matroids, the result from \cite{aprile2019regular} is based on Seymour's decomposition \cite{seymour1986decomposition}. This fundamental theorem states that every regular matroid $M$ can be obtained as a 1,2 or 3-sum of graphic, cographic matroids and a special matroid on 12 elements (where 1,2,3-sums are operations between matroids that we do not define here, referring instead to \cite{seymour1986decomposition}). Hence $M$ can be decomposed into smaller building blocks, that have polynomial extension complexity. Showing that the operations of 1,2,3-sums are ``well behaved'' with respect to extension complexity and applying induction is a natural approach in order to bound $\xc(B(M))$, that is pursued in \cite{aprile2019regular}. This is easy for 1-sums and 2-sums, for which one can show that $\xc(B(M))\leq \xc(B(M_1))+\xc(B(M_2))$, where $M$ is the 1-sum or the 2-sum of matroids $M_1$ and $M_2$. However, the situation is much more complex for 3-sums. This leads to a long proof and a bound $\xc(B(M))=O(n^6)$ which the authors of \cite{aprile2019regular} believe to be far from optimal.

In a similar spirit, one can ask whether the operations of 1,2,3-sums are well behaved with respect to the hitting number. Using simple facts on the polyhedral structure of 1,2-sums from \cite{aprile20182}, one can check that $h(M)\leq h(M_1)+h(M_2)$ where $M$ is again the 1-sum or the 2-sum of $M_1$ and $M_2$. But, again, it is not clear whether a similar bound holds for the 3-sum operation. However, one could bypass Seymour's decomposition entirely and study $h(M)$ for a general regular matroid $M$. As mentioned at the end of Section \ref{sec:graphic}, it makes sense to restrict to those matroids $M$ that are ``maximal'' or ``complete'', similarly as we restrict to complete graphs when considering graphic matroids. Recall that a regular matroid $M$ is represented by a totally unimodular matrix $A$, which we can assume to have full row rank equal to $\rk(M)$. 
We say that $M$ (equivalently, $A$) is \emph{complete} if $A$ is maximally totally unimodular, i.e., if adding to $A$ any column that is not a multiple of a column of $A$ violates total unimodularity. 
Any totally unimodular matrix (without repeated columns) is a submatrix of some complete matrix of the same rank. Moreover, the number of columns (elements) we need to add before we obtain a complete matrix (matroid) is quadratic in the rank: a result of Heller \cite{heller1957linear} implies that a regular matroid with rank $r$ has at most $\frac{r(r+1)}2$ elements, apart from loops and parallel elements. This, in particular, shows that graphic matroids of complete graphs are complete.

\begin{conj}\label{conj:regular}
Let $M$ be a complete regular matroid with rank $r$. Then $h(M)$ is bounded by a polynomial in $r$. 
\end{conj}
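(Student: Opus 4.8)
The model to keep in mind is the case $M=M(K_{r+1})$ of Section~\ref{sec:graphic}, where the hitting family is the set of stars and has size $r$: the goal is to produce, for an arbitrary complete regular matroid, a polynomially large family of bases playing the role of these stars. I see two complementary routes.

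\emph{Route 1 (via Seymour's decomposition).} First I would reduce to the $3$-connected case. If $M$ is the $1$-sum or $2$-sum of $M_1$ and $M_2$, then $h(M)\le h(M_1)+h(M_2)$ by the polyhedral facts of \cite{aprile20182} recalled in Section~\ref{sec:conclusion}; moreover $M_1$ and $M_2$ are themselves complete, since a regular extension of a piece by an element not parallel to an existing one would, after re-performing the sum, yield such an extension of $M$. As the ranks of $M_1$ and $M_2$ are strictly smaller, this reduces Conjecture~\ref{conj:regular} to $3$-connected complete regular matroids. For these, Seymour's theorem \cite{seymour1986decomposition} gives that $M$ is graphic, cographic, isomorphic to the special matroid of that theorem, or a $3$-sum of two smaller regular matroids. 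A complete graphic matroid is the cycle matroid of $K_{r+1}$ (to any simple connected graph other than $K_{r+1}$ one can add an edge while staying graphic, hence regular), where $h(M)\le r$; in the third case $M$ has bounded size, so $h(M)=O(1)$; and the cographic case can be approached through the identity $h(M)=h(M^*)$ of Section~\ref{sec:graphic}, reducing it to bounding $h(M(G))$ for the (seemingly restrictive) class of graphs $G$ whose cocycle matroid is complete. This leaves the $3$-sum as the real content: one would want a bound of the form $h(M_1\oplus_3 M_2)\le (h(M_1)+h(M_2))\cdot r^{O(1)}$, obtained by describing the flacets of a $3$-sum in terms of those of the two parts together with the three connecting elements, and then splicing hitting bases of the parts across the connection. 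The hard part is precisely the polyhedral combinatorics of the $3$-sum --- the same obstacle that makes the proof in \cite{aprile2019regular} long --- together with the question, which I have not resolved, of whether the two pieces of a $3$-sum of a complete matroid can be taken complete.

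\emph{Route 2 (direct construction).} Alternatively, one works with a maximally totally unimodular matrix $A$ representing $M$, which by Heller's theorem \cite{heller1957linear} has at most $\binom{r+1}{2}$ columns; after row operations we may take $A=[\,I_r\mid A'\,]$, so the first $r$ columns form a reference basis $B_0$, which for $M(K_{r+1})$ is the star of the root, with $A'$ consisting of the columns $e_i-e_j$. The plan is to attach to each element $e\in B_0$ a basis $B_e$, designed so that for $M(K_{r+1})$ one recovers exactly the $r$ stars, and to prove that every flacet $F$ of $M$ --- a flat with $M|F$ and $M/F$ connected --- is respected by some $B_e$, i.e.\ that $B_e\cap F$ spans $F$. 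Making this work requires a structural description of maximally totally unimodular matrices going beyond Heller's cardinality bound, strong enough to supply the ``coordinate-like'' behaviour of $A$ that makes the argument go through for $K_{r+1}$; without such a description it is not even clear a priori that a hitting family of size $r^{O(1)}$, rather than merely a subexponential one, should exist.

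In both routes the crux is the same: a lack of structural control --- over the $3$-sum operation in the first case, over the combinatorics of maximally totally unimodular matrices in the second. Intermediate milestones that would already be worthwhile include verifying the conjecture for complete regular matroids of small rank; proving it for regular matroids with no minor isomorphic to the special matroid of Seymour's theorem, for which that theorem provides a decomposition using only graphic and cographic matroids; and establishing the weaker bound $h(M)=2^{o(r)}$.
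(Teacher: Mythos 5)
The statement you are addressing is Conjecture \ref{conj:regular}, which the paper explicitly leaves \emph{open}; there is no proof in the paper to compare against, and your proposal does not close the conjecture either --- it is a research plan whose two routes both terminate at obstacles that you yourself flag as unresolved. Concretely, in Route 1 the entire content of the conjecture is concentrated in the $3$-sum step, and no bound of the form $h(M_1\oplus_3 M_2)\le (h(M_1)+h(M_2))\cdot r^{O(1)}$ is proved or even sketched beyond the stated intention to ``describe the flacets of a $3$-sum''; this is exactly the obstacle the paper identifies in Section \ref{sec:conclusion} as the reason the proof in \cite{aprile2019regular} is long. Moreover, even granting such a bound, a \emph{multiplicative} loss of $r^{O(1)}$ per level of the decomposition tree compounds to a superpolynomial quantity when the tree has large depth; the recursion stays polynomial only under an additive bound such as $h(M)\le h(M_1)+h(M_2)+r^{O(1)}$, which is a strictly stronger requirement. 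Two further steps of Route 1 are also not established: (i) the claim that the pieces of a $2$- or $3$-sum of a complete matroid are themselves complete rests on a one-line heuristic (an extension of a piece could become parallel to, or otherwise interact with, elements across the sum, and for $3$-sums you concede the question is open); (ii) the cographic branch is merely restated, via $h(M)=h(M^*)$, as bounding $h(M(G))$ for an unidentified class of graphs $G$, while the paper notes at the end of Section \ref{sec:graphic} that $h(M(G))$ is not known to be polynomial even for grid graphs, so this reduction does not obviously make progress.

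Route 2 contains no construction: the bases $B_e$ are postulated but never defined beyond the requirement that they specialize to stars for $M(K_{r+1})$, and you correctly observe that Heller's cardinality bound \cite{heller1957linear} provides no structural handle on maximally totally unimodular matrices sufficient to define them or to verify that every flacet is hit. What the write-up does get right --- the reduction via $1$- and $2$-sums to the $3$-connected case using $h(M)\le h(M_1)+h(M_2)$ from \cite{aprile20182}, the identification of complete graphic matroids with $M(K_{r+1})$, the duality $h(M)=h(M^*)$, and the observation that Seymour's decomposition \cite{seymour1986decomposition} together with Theorem \ref{thm:main} would yield the desired consequence --- is essentially the programme already laid out in Sections \ref{sec:graphic} and \ref{sec:conclusion} of the paper. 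Your suggested milestones (small rank, regular matroids avoiding the special matroid of Seymour's theorem as a minor, a $2^{o(r)}$ bound) are sensible, but as it stands the proposal is an accurate map of the open problem rather than a proof of it.
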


A proof of Conjecture \ref{conj:regular} would imply, via Theorem \ref{thm:main}, a polynomial bound on the extension complexity of \emph{any} regular matroid $M$. A stronger version of the conjecture might hold, with $h(M)=O(r)$, as is the case for graphic matroids. This would imply that, for any regular matroid $M$ with rank $r$, $\xc(B(M))=O(r^4)$.

Finally, a bound similar to the one given in Theorem \ref{thm:main} could extend to more general polytopes. 
However, a polynomial bound of $\xc(P)$ in terms of $h(P)$ for any polytope $P$ would have very strong consequences: for instance it would imply that stable set polytopes of perfect graphs, and more generally 2-level polytopes, have polynomial extension complexity, which is a notorious open question (see \cite{aprile2018thesis, aprile2017extension, macchia2018two}). Indeed, such polytopes have linear hitting number. A more moderate goal would be to show that $\xc(P)$ is quasipolynomially bounded in terms of $h(P)$ and their dimension $d$: this would generalize the $d^{O(\log(d))}$ bound on stable set polytopes of perfect graphs on $d$ vertices from \cite{yannakakis1991expressing} to all 2-level polytopes, solving a prominent open question related to the log-rank conjecture \cite{lovett2014recent} (see \cite{aprile2018thesis} for details on this connection). We leave further investigations on the subject as an open question. In particular, is there a polytope $P$ such that $\xc(P)$ is exponentially larger than $h(P)$?

\section*{Acknowledgements}
We would like to thank Marco Di Summa and Yuri Faenza for their valuable feedback on the paper, and Samuel Fiorini and Tony Huynh for helpful discussions on the extension complexity of matroid polytopes.
This work was supported by a grant SID 2019 of the University of Padova.

\bibliographystyle{plain}
\bibliography{main}
\end{document}